\documentclass[%
  twocolumn
%   , hidelinks
%   , hidempi
%   , linenumbers
]{mpi2015-cscpreprint}

%%%%%%%%%%%%%%%%%%%%%%%%%%%%%%%%%%%%%%%%%%%%%%%%%%%%%%%%%%%%%%%%%%%%%%%%%%%%%%%%
% INDIVIDUAL PACKAGES.                   
%
%%%%%%%%%%%%%%%%%%%%%%%%%%%%%%%%%%%%%%%%%%%%%%%%%%%%%%%%%%%%%%%%%%%%%%%%%%%%%%%%

% Proper hyphenation.
\usepackage[american]{babel}

% Graphical packages.
\usepackage{graphicx}
\usepackage{dsfont}
\usepackage{mathtools}
\usepackage{bm}
% Math packages.
\usepackage{amssymb}
\usepackage{amsthm}

\newtheorem{theo}{Theorem}
%%%%%%%%%%%%%%%%%%%%%%%%%%%%%%%%%%%%%%%%%%%%%%%%%%%%%%%%%%%%%%%%%%%%%%%%%%%%%%%%
% MAIN DOCUMENT.                                                               %
%%%%%%%%%%%%%%%%%%%%%%%%%%%%%%%%%%%%%%%%%%%%%%%%%%%%%%%%%%%%%%%%%%%%%%%%%%%%%%%%

\begin{document}
  
%%%%%%%%%%%%%%%%%%%%%%%%%%%%%%%%%%%%%%%%%%%%%%%%%%%%%%%%%%%%%%%%%%%%%%%%%%%%%%%%
% PAPER INFORMATION.                                                           %
%%%%%%%%%%%%%%%%%%%%%%%%%%%%%%%%%%%%%%%%%%%%%%%%%%%%%%%%%%%%%%%%%%%%%%%%%%%%%%%%

\title{A CFL-type Condition and Theoretical Insights for Discrete-Time Sparse Full-Order Model Inference}
  
\author[$1$,$\ast$]{Leonidas Gkimisis}
\author[$1$]{Süleyman Yıldız}
\author[$1, 2$]{Peter Benner}
\author[$2$]{Thomas Richter}

\affil[1]{Computational Methods in Systems and Control Theory (CSC), Max Planck Institute for Dynamics of Complex Technical Systems, Sandtorstra{\ss}e 1, 39106 Magdeburg, Germany}

\affil[2]{Institute for Analysis and Numerics, Otto-von-Guericke-Universit\"at Magdeburg, Universit\"atsplatz 2, 39106 Magdeburg, Germany}
  
\abstract{%
In this work, we investigate the data-driven inference of a discrete-time dynamical system via a sparse Full-Order Model (sFOM). We first formulate the involved Least Squares (LS) problem and discuss the need for regularization, indicating a connection between the typically employed $l_2$ regularization and the stability of the inferred discrete-time sFOM. We then provide theoretical insights considering the consistency and stability properties of the inferred numerical schemes that form the sFOM and exemplify them via illustrative, 1D test cases of linear diffusion and linear advection. For linear advection, we analytically derive a ``sampling CFL'' condition, which dictates a bound for the ratio of spatial and temporal discretization steps in the training data that ensures stability of the inferred sFOM. Finally, we investigate the sFOM inference for two nonlinear problems, namely a 2D Burgers' test case and the incompressible flow in an oscillating lid-driven cavity, and draw connections between the theoretical findings and the properties of the inferred, nonlinear sFOMs.}

\novelty{sparse FOM inference for dynamical systems in discrete time. Theoretical insights on the analytical solution  of the sparse FOM least-squares problem. Established connection between the stability of sparse FOM and the $l_2$ regularization of the least-squares problem.}

\maketitle

%%%%%%%%%%%%%%%%%%%%%%%%%%%%%%%%%%%%%%%%%%%%%%%%%%%%%%%%%%%%%%%%%%%%%%%%%%%%%%%%
  
\section{Introduction}%
\label{sec:intro}

Physics-informed data-driven methods have emerged over the last few decades as a powerful tool for inference of system dynamics in physics and engineering. Leveraging physical knowledge in data-driven modeling enhances successful and robust predictions, while significantly restricting the solution space of the inference task. In this paper we focus on sFOM inference for discrete-time systems and present theoretical findings on consistency and stability, of infered models. We investigate these properties via linear and nonlinear numerical test cases.

The availability of high resolution numerical and experimental data due to the significant advancements in hardware and software, in parallel with the recent developments in linear algebra and machine learning methodologies, render data science as a key subject for tasks of system modeling, optimization and control. Data-driven methods have the potential to complement numerical solvers for such applications, resulting in significant time and resource savings.

The simultaneous usage of numerical simulations, experimental data and data-driven methods is central for the concept of digital twins \cite{Fuller2020,Kapteyn2022,Niederer2021}. The computational copy of a critical engineering system is continuously updated by the assimilation of numerical or experimental data, thus driving real-time control and design decisions. Such applications motivate the development of data-driven methods for dynamical systems, where the behaviour of the system is predicted using numerical or experimental data.

From the multitude of the available data-driven methods for dynamical systems \cite{Brunton2019}, we focus on approaches of physics-informed machine learning. These methods incorporate considerations on the underlying physics of dynamical systems and thus enable tailored data-driven solutions \cite{Karniadakis2021}. Developed approaches differ in terms of the assumed data availability and the degree to which system physics are a priori known \cite{peherstorfer2016data, raissi2017physics, brunton2016discovering, karachalios2022datadriven, SCHUMANN2023, Schaeffer2018, Mayo2007, SCHMID_2010, maddu2021stencilnet, Cenedese2022}. In the research direction of \cite{karachalios2022datadriven, Mayo2007}, derivation of state-space models is performed from input-output frequency response measurements. Assuming access to state measurements, \cite{SCHMID_2010, peherstorfer2016data} infer reduced-order models (ROMs) based on an a priori known physics-informed model structure, while \cite{brunton2016discovering, Schaeffer2018, Cenedese2022} derive the model structure from candidate analytical functions. In \cite{SCHUMANN2023, maddu2021stencilnet}, the inference of sparse models on the full-order level is studied through the utilization of a physics-informed model structure.

The latter method aims to infer numerical schemes with the use of state training data, which can then be used to construct sparse Full-Order Models (sFOMs) that can predict the system dynamics under different parameter values, initial or boundary conditions from those used during training \cite{SCHUMANN2023}. A potential advantage of this approach is the reduced dependence on the span of the training data snapshot matrix, compared to projection-based methods \cite{sfom_opinf}. In parallel, intrusive projection techniques, such as Proper Orthogonal Decomposition (POD), can be readily used to derive first-learn-then-project non-intrusive ROMs \cite{gkimisis2023adjacency, GkimisisFSI, prakashROM}. Nonetheless, inference on the full-order level inevitably introduces issues of stability \cite{prakash2024datadriven, SCHUMANN2023, sfom_opinf} and entails a significant offline and online computational cost \cite{sfom_opinf}.

In this study, we investigate the properties of sFOM inference for discrete-time systems and provide theoretical findings which highlight its potential and limitations. We first establish the connection between the $l_2$ regularization and the stability of the inferred sFOM, following the argumentation in \cite{sfom_opinf}. We derive a closed-form solution for the non-regularized LS problem and analyze its properties for 1D, linear diffusion. Performing a Taylor approximation of the closed-form solution, we also indicate a link between the training discretization in space and time, and the resulting sFOM stability, which we entitle ``sampling CFL'' condition. Similar sampling limitations have been discussed for other data-driven methods \cite{zhang2025}, while the trade-off between spatial and temporal discretization was hinted in \cite{pidmd} for sparse FOM inference. We validate this theoretical finding with numerical results for 1D linear advection. Finally, we test sFOM inference for two nonlinear test cases, namely a 2D Burgers' example and an oscillating-lid driven cavity example. The properties of the nonlinear sFOMs are extensively discussed, in connection with the presented theoretical findings.

The remainder of this paper is organized as follows;  In \Cref{sec:adj}, we introduce the sFOM inference problem formulation and present the employed regularization strategies. In \Cref{sec:theory}, we provide theoretical insights on the sFOM inference, which are validated with numerical results on 1D linear advection and diffusion examples. In \Cref{sec:num}, we showcase the capabilities and limitations of sFOM inference for two 2D nonlinear test cases and in \Cref{sec:conc}, we discuss potential future research directions and make concluding remarks.

%%%%%%%%%%%%%%%%%%%%%%%%%%%%%%%%%%%%%%%%%%%%%%%%%%%%%%%%%%%%%%%%%%%%%%%%%%%%%%%%
\section{Adjacency-based Sparsity}
\label{sec:adj}

In this section, we formulate the basic problem linked to sparse FOM inference. The motivation behind this approach lies in the sparse structure of the discretized, differential operators, which is of spatially-localized nature for a wide range of physical systems \cite{pidmd}. Given a computational mesh and corresponding adjacency information, the sparse structure of the underlying PDEs is a priori known or can be approximated. In a data-driven context, exploiting adjacency-based sparsity allows to locally identify the involved operators \cite{SCHUMANN2023,maddu2021stencilnet, Pfaff2021}, thus enabling inference on the full-order level, via the storage and computation of a sparse FOM (sFOM).

\subsection{Problem Formulation}

We formulate the problem in line with the notation used in \cite{schum1st}. We focus on PDEs of the form

\begin{equation}
\label{contPDE}
    \frac{\partial u}{\partial t}=\mathcal{P}(u,u_x,u_y,u_z, u_{xx},u_{yy},u_{zz}...).
\end{equation}

\noindent where $\mathcal{P}(.)$ is a polynomial operator. We thus assume that $\mathcal{P}(.)$ consists of monomials of $u,u_x,u_y,u_z,$ $u_{xx},u_{yy},u_{zz}, \dots$. 
We examine $\mathcal{P}(.)$ after discretizing the continuous PDE~\eqref{contPDE} in space with a given numerical scheme that has local support. This results in a system of $n$ coupled ODEs. Using a one-step time discretization and focusing on some degree of freedom (DOF) $i$, we define the set of DOFs that correspond to the local support of $i$, as $Q_i$. These DOFs relate to spatially adjacent mesh nodes to the mesh node of $i$. The $i$-th ODE, discretized in time, can then be written as

\begin{equation}
\label{coefs_disc}
    u_i^{k+1}={\mathbf{f}(\mathbf{u}^k_{Q_i})^\top} \bm{\beta}_i,
\end{equation}

\noindent where $\mathbf{f}(\mathbf{u}^k_{Q_i})$ denotes the vector of geometrically adjacent variables to $i$ computed at timestep $k$ and $\bm{\beta}_i$ denotes the corresponding vector of numerical coefficients. An analogous formulation follows for continuous time; the l.h.s of the equation is the time derivative $\frac{\textrm{d}u}{\textrm{d}t}$ instead of $u_i^{k+1}$ and vectors $\mathbf{f}(\mathbf{u}^k_{Q_i})$, $\bm{\beta}_i$ are accordingly modified \cite{sfom_opinf}. In the following, we will focus on discrete-time systems in the form of \eqref{coefs_disc}.

In order to use \eqref{coefs_disc} in a data-driven framework, we should first specify the entries of $\mathbf{f}_{Q_i}$. For this reason, physics-based knowledge about the underlying PDE \eqref{contPDE} is required. As an example, if $\mathcal{P}(.)$ includes a quadratic nonlinearity, then after spatial discretization, the system of $n$ ODEs reads as 

\begin{equation}
\label{fom_structure}
\frac{\textrm{d}\mathbf{u}}{\textrm{d}t} =\mathbf{A} \mathbf{u} + \mathbf{H} ( \mathbf{u} \otimes \mathbf{u}) + \mathbf{c},
\end{equation}

\noindent where $\mathbf{A} \in \mathbb{R}^ {n \times n}$, $\mathbf{H}  \in \mathbb{R}^ {n \times n^2}$ and $\mathbf{c} \in \mathbb{R}^ {n}$. By discretizing \eqref{fom_structure} in time, we obtain the corresponding fully discrete system as

\begin{equation}
\label{fom_structure_disc}
\mathbf{u}^{k+1} =\mathbf{A}_d \mathbf{u}^k + \mathbf{H}_d ( \mathbf{u}^k \otimes \mathbf{u}^k) + \mathbf{c}_d.
\end{equation}

At this stage, we are interested to approximate $\mathbf{A}_d \in \mathbb{R}^ {n \times n}$ and $\mathbf{H}_d  \in \mathbb{R}^ {n \times n^2}$ in sparse form \cite{Bickel2012}. Substituting the $i$-th equation of \eqref{fom_structure_disc} into \eqref{coefs_disc} leads to

\begin{equation}
\label{fom_variables_i}
\mathbf{f}(\mathbf{u}^k_{Q_i}) = 
\begin{bmatrix}
  \mathbf{u}_{Q_i}^k \\ \mathbf{u}_{Q_i}^k \otimes \mathbf{u}_{Q_i}^k \\   1
\end{bmatrix},
\qquad
\bm{\beta}_i = 
\begin{bmatrix}
  {\mathbf{A}_d}_{i, Q_i}^\top \\
  {\mathbf{H}_d}_{i, E_i}^\top \\  \mathbf{c}_i
\end{bmatrix},
\end{equation}

\noindent with $\mathbf{f}(\mathbf{u}^k_{Q_i}), \; \bm{\beta}_i \in \mathbb{R}^{r_i^2+r_i+1}$, where $E_i$ corresponds to the $r_i^2$ combinations of DOFs in $Q_i$ \cite{GkimisisFSI}. We note that $ \bm{\beta}_i$ can be simplified to only consider the unique $r \choose 2$ of the total $r_i^2$ entries of the quadratic term. Knowing a physics-based structure as in \eqref{fom_structure_disc} then allows us to employ \eqref{coefs_disc} to infer the vector of coefficients $\bm{\beta}_i$, given snapshot data for $\mathbf{u}$ and a given set of geometrically adjacent DOFs, $Q_i$.

In a data-driven context, the inference of $\bm{\beta}_i$ via \eqref{coefs_disc} can be written as a least squares problem. Collecting state snapshot data for timesteps $t_0, \; \dots,\; t_N$, we construct the snapshot matrix $\mathcal{D}_i \in \mathbb{R}^{N \times r_i}$ and vector $\mathbf{d}_i \in \mathbb{R}^{N \times 1}$ as

\begin{equation}
\label{data_snapshots}
\mathcal{D}_i=\begin{bmatrix}
\mathbf{f}_{Q_i}^T(t_0) \\ \vdots \\ \mathbf{f}_{Q_i}^T(t_{N-1}) \end{bmatrix}, \qquad
\mathbf{d}_i=\begin{bmatrix}
u_{i}(t_1) \\ \vdots \\ u_{i}(t_N)
\end{bmatrix}.
\end{equation}

\noindent Then, the least squares (LS) problem for the numerical coefficients of DOF $i$ is written as

\begin{equation}
\label{LSform}
\min _{\bm{\beta}_i}{\left\|\mathcal{D}_i \bm{\beta}_i -  \mathbf{d}_i\right\|_{2}^2}.
\end{equation}

For the sFOM inference on uniform grids, \cite{sfom_opinf} suggested using data augmentation. Augmenting the system \eqref{LSform} with data from $p$, randomly selected DOFs in space enriches the data matrices' spatial and temporal information on the system dynamics. We denote the corresponding set of indices as $\alpha \in \mathbb{N}^p$. By such a data augmentation procedure, \eqref{LSform} is modified to

\begin{equation}
\label{daug}
\min _{\bm{\beta}_\alpha}{\left\|\mathcal{D}_\alpha \bm{\beta}_\alpha -  \mathbf{d}_\alpha \right\|_{2}^2},
\end{equation}

\noindent where 

\begin{equation}
\mathbf{d}_\alpha=\begin{bmatrix}
\mathbf{d}_{\alpha_1} \\ \vdots \\ \mathbf{d}_{\alpha_r}
\end{bmatrix}, \qquad
\mathcal{D}_a=\begin{bmatrix}
\mathcal{D}_{a_1} \\ \vdots \\ \mathcal{D}_{a_r}\end{bmatrix}.
\end{equation}

\subsection{Regularization strategies}
\label{regularization}

In practice, the solution of \eqref{LSform} is seldom applicable for inference of dynamical systems, due to numerical ill-conditioning of the LS problem and the resulting instability of the inferred dynamical system. Therefore, \eqref{LSform} is complemented with a regularization term which penalizes some norm of the solution $\bm{\beta}_i$. The authors in \cite{schum1st, SCHUMANN2023} performed a comprehensive study on the usage of both $l_2$ and sparsity-promoting $l_1$ regularization for \eqref{LSform}, also for noisy data. In \cite{prakash2024datadriven}, linear, stability-promoting constraints, motivated by the Gershgorin circle theorem \cite{gershgorin} were considered in addition to the $l_2$ regularization for the sFOM inference, leading to stable, continuous-time systems. In a similar direction, \cite{sfom_opinf} proposed a stability-promoting regularization with a closed-form solution for the inference of stable, continuous-time sFOMs. We focus on the LS formulation with $l_2$ regularization, which is written as

\begin{equation}
\label{LS_reg}
\min _{\bm{\beta}_i}{\left\|\mathcal{D}_i \bm{\beta}_i -  \mathbf{d}_i\right\|_{2}^2 + \eta_i \left\| \bm{\beta}_i \right\|_{2}^2},
\end{equation}

\noindent where $\eta_i$ is a hyper-parameter of the problem that weights the effect of the regularization term. The value of $\eta_i$ has been reported to be crucial for sFOM inference \cite{gkimisis2023adjacency, GkimisisFSI, schum1st}, as well as for other inference methods \cite{peherstorfer2016data, Schaeffer2018}. Indeed, study \cite{sfom_opinf} established a connection between the value of $\eta_i$ for the inference of continuous time systems and the corresponding radius of the $i$-th Gershgorin circle of the inferred linear operator.

For linear discrete-time systems, i.e. \eqref{fom_structure_disc} with $\mathbf{H}_d=\mathbf{0}, \; \mathbf{c}_d=\mathbf{0}$, the necessary and sufficient condition for stability is that the eigenvalues of $\mathbf{A}$ lie within the unit circle. This can be written as

\begin{equation}
\label{eigs_discrete}
|\lambda_i| \leq 1, \qquad \forall i \in \left[1, \dots, n\right]
\end{equation}

The connection between \eqref{eigs_discrete} and the values of the data-driven entries in \eqref{LS_reg} can be established analogously to \cite{prakash2024datadriven, sfom_opinf}, using the Gershgorin circle theorem \cite{gershgorin}. For discrete-time systems, a sufficient (but not necessary) stability condition for the inferred linear operator $\mathbf{A}$ is that all its Gershgorin circles lie within the unit circle. This can be written through \eqref{fom_variables_i}, for the inference of a linear system, as

\begin{equation}
\label{GerSt}
   \| \bm{\beta}_i \|_1 \leq 1, \qquad \forall i=\left[1, \dots, n\right],
\end{equation}

\noindent which is equivalent to the maximum norm stability of $\mathbf{A}$. The $l_1-l_2$ norm inequality for $\bm{\beta}_i$ is written as

\begin{equation}
\label{L1_L2}
  \| \bm{\beta}_i \|_2 \leq \| \bm{\beta}_i \|_1 \leq \sqrt{n_{Q_i}} \| \bm{\beta}_i \|_2.
\end{equation}

Observing the necessary stability condition \eqref{GerSt} and \eqref{L1_L2}, we then notice that the typically employed $l_2$ regularization in \eqref{LS_reg} is indeed stability-promoting for discrete-time systems, since it restricts the value of $\| \bm{\beta}_i \|_1$ in \eqref{GerSt}, by penalizing $\| \bm{\beta}_i \|_2$ (through \eqref{L1_L2}). Conversely, depending on the properties of the training data in \eqref{LS_reg}, \eqref{L1_L2} indicates that $ \bm{\beta}_i$ can a priori violate the sufficient stability guarantee in \eqref{GerSt}. We will further investigate such properties of the training data in the following.

\section{Theoretical Insights}
\label{sec:theory}
\subsection{Closed-form solution}

We first give an existence and uniqueness argument for the solution of \eqref{LSform}. Although we focus on the discrete-time formulation, the analysis for continuous-time systems follows analogously.

Let the simulation data be obtained from a discretized operator $\mathcal{P}(.)$, with a given numerical scheme and time discretization, resulting in \eqref{coefs_disc}.

\begin{theo}
If the selected $Q_i$ and $\mathbf{f}$ for the inference of $\bm{\beta}_i$ in \eqref{LSform} are the same as those employed for data collection via \eqref{coefs_disc} and if $\mathcal{D}_i$ in \eqref{LSform} has full column rank, there exists a unique solution  $\bm{\beta}_i$ to \eqref{LSform}. Furthermore, the solution $\bm{\beta}_i$ identically satisfies $ \mathbf{d}_i=\mathcal{D}_i \bm{\beta}_i$
\end{theo}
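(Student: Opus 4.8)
The plan is to observe that the theorem is essentially a statement about the consistency of an overdetermined linear system combined with the standard uniqueness result for full-rank least-squares problems. First I would argue consistency: since the training data are generated by the very relation \eqref{coefs_disc} with the \emph{same} local support $Q_i$ and the \emph{same} feature map $\mathbf{f}$, there exists a ``true'' coefficient vector $\bm{\beta}_i^\star$ (the one used to produce the data) such that every row of the data system holds exactly, i.e.\ $u_i(t_{k+1}) = \mathbf{f}_{Q_i}^\top(t_k)\,\bm{\beta}_i^\star$ for $k = 0,\dots,N-1$. Stacking these $N$ scalar identities is precisely the statement $\mathbf{d}_i = \mathcal{D}_i\bm{\beta}_i^\star$, so the residual at $\bm{\beta}_i^\star$ is zero and the minimum of the objective in \eqref{LSform} is $0$.

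Next I would invoke the full-column-rank hypothesis on $\mathcal{D}_i$. The least-squares objective $\|\mathcal{D}_i\bm{\beta}_i - \mathbf{d}_i\|_2^2$ is a convex quadratic in $\bm{\beta}_i$ whose Hessian is $2\,\mathcal{D}_i^\top\mathcal{D}_i$; full column rank makes this Hessian positive definite, so the objective is strictly convex and therefore has at most one minimizer. Equivalently, the normal equations $\mathcal{D}_i^\top\mathcal{D}_i\,\bm{\beta}_i = \mathcal{D}_i^\top\mathbf{d}_i$ have the unique solution $\bm{\beta}_i = (\mathcal{D}_i^\top\mathcal{D}_i)^{-1}\mathcal{D}_i^\top\mathbf{d}_i$. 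Combining this with the consistency established above: the minimizer is unique, and since $\bm{\beta}_i^\star$ achieves the (global) minimum value $0$, the unique minimizer must equal $\bm{\beta}_i^\star$, which by construction satisfies $\mathbf{d}_i = \mathcal{D}_i\bm{\beta}_i$ exactly. One can also see the exact-fit conclusion directly: if $\mathcal{D}_i\bm{\beta}_i \neq \mathbf{d}_i$ at the optimum then the residual norm would be strictly positive, contradicting that $\bm{\beta}_i^\star$ attains $0$.

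The main (and really the only) obstacle is making the consistency argument airtight, since it hinges entirely on the modeling assumption that the inference ansatz matches the data-generating process: the same adjacency set $Q_i$, the same monomial features $\mathbf{f}$, and the same one-step time discretization. If any of these differ — for instance a larger or smaller stencil, or a different time integrator — there need not exist a $\bm{\beta}_i$ reproducing the data exactly, and the conclusion fails. So I would state explicitly that under the hypotheses the data matrices $\mathcal{D}_i,\mathbf{d}_i$ in \eqref{data_snapshots} are built from trajectories that satisfy \eqref{coefs_disc} row-by-row with a fixed coefficient vector, making the system exactly consistent; the rank hypothesis then upgrades ``a solution exists'' to ``a unique solution exists,'' and the two facts together give the final claim. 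The data-augmented version \eqref{daug} follows verbatim, since stacking consistent blocks keeps the stacked system consistent and the full-rank hypothesis on $\mathcal{D}_\alpha$ plays the same role.
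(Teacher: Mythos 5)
Your proposal is correct and follows essentially the same route as the paper's own proof: exact consistency of the stacked system because the inference ansatz matches the data-generating scheme, plus uniqueness from the full-column-rank hypothesis. In fact, the paper's proof is little more than a restatement of these two facts, so your version — making the zero-residual argument and the strict convexity of the normal equations explicit — is a more complete rendering of the same idea.
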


\begin{proof}
We assume that after discretization in time, the local support $Q_i$ and the physics-based model $\mathbf{f}$ on the right hand side of \eqref{LSform} is exactly the one used for data collection by numerical simulation of the discretized PDE via \eqref{coefs_disc}. Also, if $\mathcal{D}_i$ in \eqref{LSform} has full column rank, there exists a unique solution $\bm{\beta}_i$ to \eqref{LSform}. This solution $\bm{\beta}_i$ identically satisfies $ \mathbf{d}_i=\mathcal{D}_i \bm{\beta}_i$, by our first assumption. Solving \eqref{LSform} then results in identification of the numerical scheme used for the simulation.
\end{proof}

If the assumptions of the above theorem are met, the solution of \eqref{LSform} is trivial. However, these conditions are rarely known in practice; first, the assumed structure of the data-driven model, via $\mathbf{f}$, might differ from that of the numerical simulation and thus introduce modeling errors. Even when the mathematical structure of \eqref{coefs_disc} is exact, the numerical stencil $Q_i$ might differ from the one used during simulation. It should be noted that the sparsity pattern of the underlying operators is also affected by the employed time discretization scheme. Moreover, the matrix $\mathcal{D}_i$ is often rank deficient (e.g. for advection problems discussed in \cite{SCHUMANN2023}). Finally, the data in $\mathcal{D}_i$ and $\mathbf{d}_i$ might originate from an analytical PDE solution or from experiments. In such cases, the question of whether \eqref{LSform} admits a solution that can be used to infer the system dynamics is not trivial.

Starting from \eqref{LSform}, we write the SVD of $\mathcal{D}_i^T$ as

\begin{equation}
\label{data_svd}
\mathcal{D}_i^T=\Phi_i \Sigma_i \Psi_i^T    
\end{equation}

\noindent where $\Phi_i \in \mathbb{R}^{d \times d}$ is the spatial orthonormal basis of the data matrix, $\Sigma_i \in \mathbb{R}^{d \times d}$ is a diagonal matrix and $\Psi_i \in \mathbb{R}^{N \times d}$ is a temporal orthonormal basis. See \cite{morBauBH18} for a more in-depth interpretation of the basis properties of the left and right singular vectors on spatial and temporal domains.

Correspondingly, shifting the data timestep into the future, we obtain the SVD of $\mathbf{d}_i$ as

\begin{equation}
\label{lhs_svd}
\mathbf{d}_i^T=\begin{bmatrix}
u_{i}(t_1), \; \dots, \; u_{i}(t_{N}) \end{bmatrix}=\phi_{S_i} \sigma_{S_i} \bm{\psi}_{S_i}^T.
\end{equation}

\noindent Then, the solution of \eqref{LSform} based on the SVD can be explicitly written as

\begin{equation}
\label{soln}
\bm{\beta}_i= \Phi_i \Sigma_i^{\dagger} \Psi_i^T \bm{\psi}_{S_i} \sigma_{S_i} \phi_{S_i}^T.
\end{equation}

 We note that for an underdetermined system of equations, solution \eqref{soln} coincides with the LS minimum-norm solution \cite{strang2021introduction}. \eqref{soln} already exemplifies the need for the regularization in \eqref{LS_reg}; by taking the inverse of $\Sigma_i$, the contribution of singular vectors with a low energy content in the training data is amplified. We observe that the term $\Psi_i^T \bm{\psi}_{S_i}$ in \eqref{soln}, i.e., the correlation of the temporal orthonormal basis $\Psi_i$ with its shifted counterpart,  $\bm{\psi}_{S_i}$, is critical for the data-driven solution $\bm{\beta}_i$, as it is the term that encodes the evolution of the data in time. If $\bm{\psi}_{S_i}$ is in the span of matrix $\Psi_i$, such that  $\bm{\psi}_{S_i}= \Psi_i \mathbf{c}$, we obtain

\begin{equation}
\label{ill_diff}
\bm{\beta}_i= \Phi_i \mathbf{c}  \phi_{S_i}^T.
\end{equation}

\noindent due to orthogonality. The solution of \eqref{LSform} is then simply a linear combination of the spatial basis vectors in $\Phi_i$. 

\subsubsection{Example: Linear diffusion}
\label{subs:lindif}

The closed-form solution in \eqref{soln} is illustrated, by considering data given by

\begin{equation}
\label{diff_1}
    u(x,t)= \cos(x) e^{ -c t}.
\end{equation}

\noindent Data in the form of \eqref{diff_1} result from the analytical solution of the 1D linear diffusion problem

\begin{equation}
\label{diff_PDE}
    \frac{\partial{u}}{\partial{t}}=c  \frac{\partial^2{u}}{\partial{x}^2}, \qquad u(x,0)=\cos(x).
\end{equation}

\noindent Then, the data matrix \eqref{data_svd} is

\begin{equation}
\label{diff_svd}
\mathcal{D}_i = \begin{bmatrix}
    e^{ -c t_0} \\ \dots \\ e^{ -c t_{N-1}}
\end{bmatrix}\begin{bmatrix}
    \cos(x_{i-m}), \dots, \cos(x_i), \dots, \cos(x_{i+l})
\end{bmatrix},
\end{equation}

\noindent which comprises a rank-1 decomposition of $\mathcal{D}_i$. Similarly, \eqref{lhs_svd} is in this case
\begin{equation}
\label{diff_lhs_svd}
\mathbf{d}_i^T = \cos(x_i) e^{ -c \Delta t}
\begin{bmatrix}
    e^{ -c t_0}, \dots, e^{ -c t_{N-1}}
\end{bmatrix}.
\end{equation}

In the data driven context, we can solve the data-driven inference problem in \eqref{LSform}, assuming a linear system and a local support of $m+l+1$ indices, i.e., $Q_i=\{i-m, \dots, i, \dots, i+l\}$. This specific example falls under the special case where \eqref{ill_diff} is satisfied, since $\mathcal{D}_i$ is rank-1. We can analytically write out the minimum-norm solution to \eqref{soln} for this case as

\begin{equation}
\label{diff_soln}
\bm{\beta}_i = e^{-c \Delta t} \begin{bmatrix}
    \cos(x_{i-m}) \\ \dots \\ \cos(x_i) \\ \dots \\ \cos(x_{i+l})
\end{bmatrix} \cos(x_i) \left( \sum_{k=-m}^{l}{\cos^2(x_{i+k})} \right) ^{-1}.
\end{equation}

We observe that solution \eqref{diff_soln} highly depends on the sampling position $x_i$, while $\Delta t$ is only uniformly amplifying the solution $\bm{\beta}_i$.

By considering $m=l=1$ and fine space and time discretizations, such that $\Delta x \ll 1$ and $\Delta t \ll 1$, \eqref{diff_soln} simplifies to

\begin{equation}
\label{approx_diff}
\bm{\beta}_i \approx \frac{1-c \Delta t}{3 +2( \tan(x_i) \Delta x)^2} [1+ \tan(x_i) \Delta x  \; 1\; 1-\tan(x_i) \Delta x]^T.
\end{equation}

The numerical scheme in \eqref{approx_diff} is always stable, since the sufficient stability condition \eqref{GerSt} is satisfied for both $\| \tan(x) \| \leq \left( \Delta x \right)^{-1}$ and $\| \tan(x) \| \geq \left( \Delta x \right)^{-1}$. However, performing the Taylor expansion for $u_{i-1}$ and $u_{i+1}$ we see that the inferred scheme is of 0 order. From a different perspective, this can be viewed by examining the consistency of the numerical scheme (see Chapter 4.3, \cite{GeorgMay}), which is equivalent to the sum of numerical coefficients being equal to 1. This ensures that if the solution is constant across $Q_i$, then $u_i^{k+1}=u_i^k$. The sum of coefficients for \eqref{approx_diff} is

\begin{equation}
\label{diff_sum}
    \sum_{j=1}^{3}{\beta_i}_j = (1-c \Delta t) \frac{3}{3+2 (\tan(x_i) \Delta x)^2}.
\end{equation}

\noindent \eqref{diff_sum} indicates that the inferred numerical scheme is consistent only at the limit of $\Delta t, \Delta x \rightarrow 0$ and given that $\cos(x_i) \neq 0$. The convergence is linear with respect to $\Delta t$ and quadratic with respect to $\Delta x$. When $\cos(x_i) \rightarrow 0$, the solution of \eqref{LSform} results to stable, though potentially inaccurate numerical schemes. This highlights the effect of the spatial location of $i$ for the sFOM properties. Furthermore, it indicates a first limitation on generalizing the solution \eqref{soln} of the data-driven inference problem. Data augmentation can potentially alleviate this issue for uniform grids, by augmenting the matrices in \eqref{data_snapshots} with data from several, randomly selected DOFs. The generalization of the inferred numerical schemes is then possible \cite{sfom_opinf}.

We showcase the inferred linear sFOMs for \eqref{diff_1}, using either \eqref{LSform} or \eqref{daug}. For this test case, we use training data from the analytical solution \eqref{diff_soln} with $c=1$, $x \in [-\pi, \; \pi]$ and overall time of $T=10s$ for a range of spatial and temporal discretization values, $\Delta x \in [0.005, \;0.3]$ and $\Delta t \in [0.001,\;0.1]$. We proceed to solve both \eqref{LSform} and \eqref{daug} (without regularization) using a symmetric 3-point stencil ($Q_i=\{i-1,\;  i,\; i+1\}$). \Cref{fig:analyt_dif} illustrates the solution of different sFOMs at $T=10$ compared to the simulation data, as well as their average error $e$ with respect to the simulation data, defined as

\begin{equation}
\label{av_error}
e=\frac{\| \overline{u_{sFOM}(t=T)-u(t=T)} \|}{\max{(u(t=T)})}\times 100 \%,
\end{equation}

\noindent at the last timestep, $t=T$. Solving \eqref{LSform} without data augmentation, for each DOF $i$, leads to a stable sFOM (as expected by \eqref{approx_diff} and \eqref{GerSt}), exhibiting a linearly increasing error with respect to the training data time discretization, $\Delta t$. 

For the solution of \eqref{daug}, we append the matrices of one LS problem with data from $5\%$ of the total DOFs. The resulting $\bm{\beta}$ is then used for all $i \in [1, \;n]$ DOFs to form a linear sFOM. The stability properties of the resulting sFOM differ from those of \eqref{LSform}. For the same $\Delta t$, increasing the spatial discretization of the data $\Delta x$ can lead to unstable models. In contrast, \Cref{fig:analyt_dif} indicates that for a given $\Delta x=0.24$, the sFOM solution explodes when $\Delta t$ is smaller than a threshold value of $\Delta t=0.007$. In the following, we further focus on the relation between the stability of an inferred sFOM and the discretization properties of the training data.

\begin{figure}[!htb]
    \centering
    \includegraphics[clip,width=1.1\columnwidth]{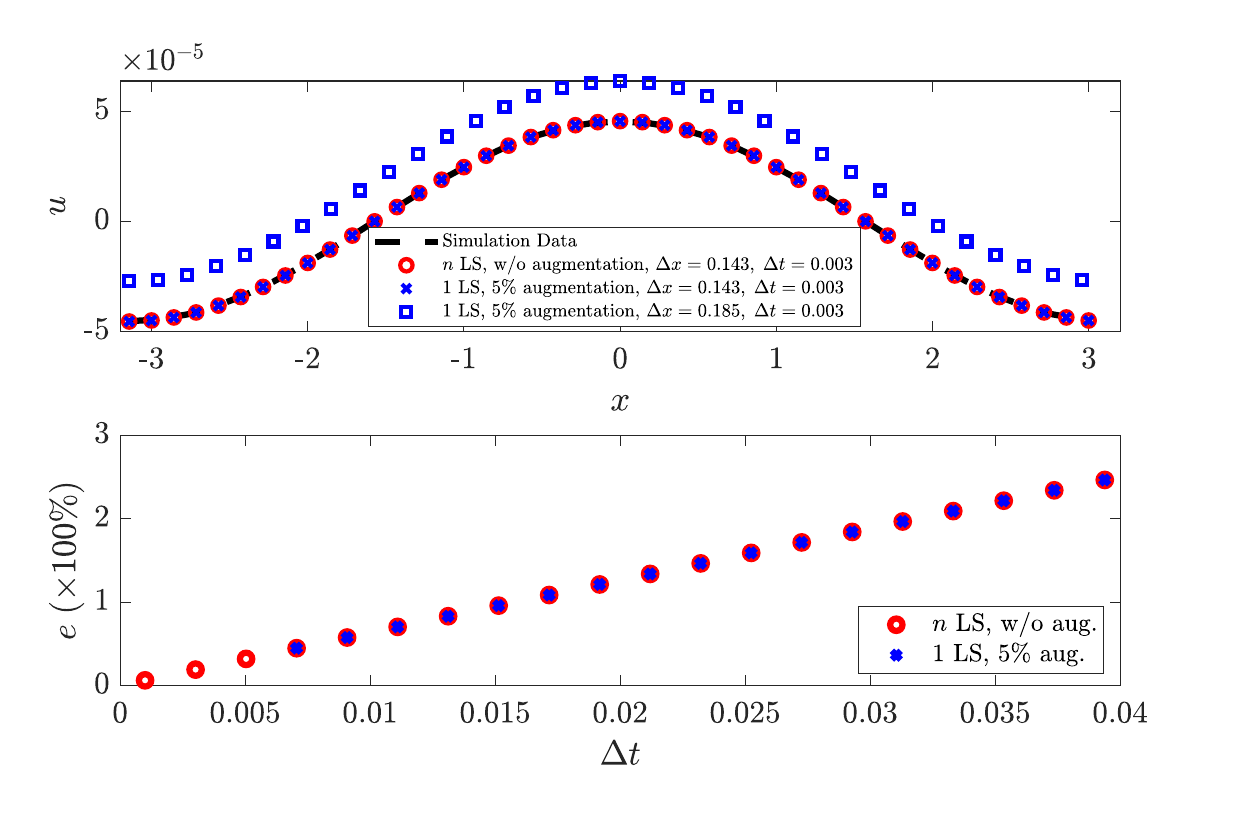}
    \caption{Numerical results for the inference of linear diffusion dynamics \eqref{diff_soln} via \eqref{LSform} and \eqref{daug}. \textit{Top}: Both \eqref{LSform} and \eqref{daug} yield accurate results for the state prediction, though training data discretization in space ($\Delta x$) and time ($\Delta t$) affect the accuracy of the augmented data solution from \eqref{daug}. \textit{Bottom}: Linear increase in the prediction average error $e$ at $t=10$, for different training data time discretizations $\Delta t$ and $\Delta x=0.24$. For very low $\Delta t$ values, \eqref{daug} yields unstable results.}
    \label{fig:analyt_dif}
\end{figure}

\subsection{A sampling CFL condition for linear, 1D problems}
\label{CFL_section}

We further analyze \eqref{LSform} to establish a connection between the stability of the inferred sFOM and the employed sampling in space and time for the collected snapshot data in \eqref{data_snapshots}. For this task we limit ourselves to linear, 1D problems. Then, for a sufficiently fine discretization in space and time ($\Delta x$ and $\Delta t$), we can derive a Taylor approximation of \eqref{soln}. 

For a one-dimensional linear system, we assume a generally unsymmetric stencil, $Q_i=\{i-m,\;...,\;i,\;...,\;i+l\}$. We then perform a first-order (or rank-2) Taylor approximation of the entries of $\mathcal{D}_i$ in \eqref{data_snapshots} as

\begin{multline}
\label{taylor_data}
\mathbf{f}(\mathbf{u}^k_{Q_i})=\begin{bmatrix}
u_{i-m}^k & ... & u_i^k & ... & u_{i+l}^k
\end{bmatrix} \\
\approx \begin{bmatrix}
u_i^k & \frac{\textrm{d}u}{\textrm{d}x}|_i^k
\end{bmatrix}\begin{bmatrix}
1 & ... & 1 & ... & 1\\
-m\Delta x & ... & 0 & ... & l\Delta x
\end{bmatrix}.
\end{multline}

\noindent We note that the 1st order Taylor expansion in \eqref{taylor_data} corresponds to using a two-dimensional, generally non-orthogonal basis to approximate $\mathcal{D}_i$ in \eqref{data_snapshots}. We similarly express the entries of the right-hand side vector $\mathbf{d}_i$ in \eqref{data_snapshots} as

\begin{equation}
\label{taylor_future}
u_{i}^{k+1} \approx \begin{bmatrix}
u^k_i & \frac{\textrm{d}u}{\textrm{d}t}|^k_i \end{bmatrix} \begin{bmatrix}
1 \\ \Delta t
\end{bmatrix}.
\end{equation}

This approximation can be viewed as an explicit Euler approximation of the time derivative $\frac{\textrm{d}u}{\textrm{d}t}|^k_i$. By substituting \eqref{taylor_data} and \eqref{taylor_future} into \eqref{data_snapshots}, we can derive a first-order approximation for the solution of \eqref{LSform}. To do so, we first write out an $(n_t-1)\times 2$ overdetermined system with vector of unknowns $\mathbf{y} \in \mathbb{R}^2$

\begin{equation}
\label{eqn:truncated-full}
\begin{bmatrix}
u_i & \frac{\textrm{d}u}{dx}|_i
\end{bmatrix} \mathbf{y}=\mathbf{d}_i,
\end{equation}

\noindent where 
\begin{equation}
   \label{eqn:truncated-y}
   \mathbf{y}= \begin{bmatrix}
1 & ... & 1 & ... & 1\\
-m\Delta x & ... & 0 & ... & l\Delta x
\end{bmatrix} \bm{\beta}_i.
\end{equation}

After solving this system with respect to $\mathbf{y}$, we take the minimum-norm solution of the resulting underdetermined $2 \times (l+k+1)$ system \cite{strang2021introduction} with respect to $\bm{\beta}_i$. Denoting 
\begin{equation}
   \mathbf{A}= \begin{bmatrix}
1 & ... & 1 & ... & 1\\
-m\Delta x & ... & 0 & ... & l\Delta x
\end{bmatrix},
\end{equation}

\noindent the minimum-norm solution of the truncated problem \Cref{eqn:truncated-y} is written as
\begin{equation}
    \label{eqn:lst-norm}
    \bm{\beta}_i=\mathbf{A}^T(\mathbf{A}\mathbf{A}^T)^{-1}\mathbf{y}.
\end{equation}

\noindent Since $\mathbf{A}\mathbf{A}^T\in \mathbb{R}^{2\times 2}$ has full rank, it is invertible. Indexing the entries of $\bm{\beta}_i$ with $j$, we write the analytical solution of \eqref{eqn:truncated-full} as

\begin{multline}
\label{approxD}
{\bm{\beta}_i}_j = \left((ab-d^2)(c_1 c_2 - c_3^2)\right)^{-1} \left( c_2 \left( ab-d^2 \right)+ \right. \\ 
\left. c_2 \Delta t \left(b g -d e\right)  - \frac{ c_3  \Delta t}{ \Delta x} \left(a e - g d\right)  + j \left( -c_3 \left(a b - d^2\right) + \right. \right. \\ \left. \left. - \Delta t c_3 \left( b g -d e\right) + \frac{c_1 \Delta t}{\Delta x}\left( a e - g d\right) \right)\right) ,
\end{multline}

\noindent where
\begin{multline}
\label{coefs}
\left\{\begin{array}{ll}
\begin{aligned}
 c_1&=m+l+1 \\ c_2&= 1/6\left[m(m+1)(2m+1)+l(l+1)(2l+1)\right] \\ c_3&=1/2\left[(l-m)(m+l+1)\right]
 \end{aligned}
\end{array}\right.,
\end{multline}

\noindent and coefficients $a,\;b,\;d,\;g,\;e$ are depending on the system at hand and the DOF $i$:

\begin{multline} 
\label{extracoefs}
a=\mathbf{u}_i^T \mathbf{u}_i, \qquad  b=\left.\frac{\textrm{d}\mathbf{u}}{dx}\right\vert_i^T \left.\frac{\textrm{d}\mathbf{u}}{dx}\right\vert_i, \qquad d=\mathbf{u}_i^T \left.\frac{\textrm{d}\mathbf{u}}{dx}\right\vert_i, \qquad \\ g=\mathbf{u}_i^T \left.\frac{\textrm{d}\mathbf{u}}{\textrm{d}t}\right\vert_i, \qquad e=\left.\frac{\textrm{d}\mathbf{u}}{dx}\right\vert_i^T \left.\frac{\textrm{d}\mathbf{u}}{\textrm{d}t}\right\vert_i. \end{multline}

The term $(ab-d^2)$ in the denominator of \eqref{approxD} is always positive due to the Cauchy-Schwarz inequality, while after algebraic manipulation, $(c_1 c_2 - c_3^2)$ can also be shown to be positive. We observe that the entries of \eqref{approxD} are linearly dependent on the adjacent DOF index $j$, such that \eqref{approxD} can be written as

\begin{equation}
  \label{approx2}
{\bm{\beta}_i}_j=\frac{K_{1_i}+j \;K_{2_i}}{(ab-d^2)(c_1 c_2 - c_3^2)},
\end{equation}

\noindent with

\begin{equation} 
\label{anal_approx}
\left\{\begin{array}{ll}
\begin{aligned}
K_{1_i} &=  c_2 \left( ab-d^2 \right)+ c_2 \Delta t \left(b g -d e\right)  - \frac{c_3 \Delta t}{ \Delta x} \left(a e - g d\right)
\\
K_{2_i} &=  -c_3 \left(a b - d^2\right) - \Delta t c_3 \left( b g -d e\right) + \frac{c_1 \Delta t}{\Delta x}\left( a e - g d\right).
\end{aligned}
\end{array}\right.
\end{equation}

Due to the 1st order Taylor expansion used, approximation \eqref{approx2} can provide insight mainly on the inference of systems with first-order spatial derivatives, such as linear advection. The preceding analysis can be extended to higher-order approximations, two-dimensional or/and nonlinear systems, however, analytical computations become increasingly cumbersome.

We are interested in the stability of the numerical scheme in \eqref{approxD}, or conversely the stability of the resulting, inferred linear sFOM. By substituting \eqref{approxD} into the sufficient stability condition in \eqref{GerSt}, we obtain a sufficient stability condition for the resulting linear system as

\begin{equation}
  \label{stab_theory}
\sum_{j=-m}^l\|{K_{1_i}+j \;K_{2_i}}\|\leq{(ab-d^2)(c_1 c_2 - c_3^2)}, \; \forall i \in [1, \dots, n].
\end{equation}

The sufficient stability condition \eqref{stab_theory} indicates that the stability of the inferred sFOM depends on the width of the selected numerical scheme via the coefficients in \eqref{coefs}, the properties of the solution at DOF $i$ via the coefficients in \eqref{extracoefs}, but also the data sampling in space and time,  $\Delta x, \; \Delta t$ due to \eqref{anal_approx}. This is introducing an important consideration for data sampling, which was hinted in previous works \cite{pidmd,SCHUMANN2023}. The training data time discretization $\Delta t$ and space discretization $\Delta x$ can affect the stability of the derived data-driven sFOM. The first-order Taylor approximation in \eqref{taylor_data} results to a sampling CFL condition for the data-driven model.

This limitation for the data-driven inference of any physical problem which exhibits transport phenomena can be intuitively grasped analogously to the numerical CFL condition. For a given stencil $Q_i$, the timestep $\Delta t$ can be chosen sufficiently large, such that the transport phenomena are not captured by the data in $\mathcal{D}_i$. This is schematically illustrated in \Cref{fig:CFL}. It is then not possible to use the solution of \eqref{LSform} to successfully infer the system dynamics. This intuitive explanation suggests that any additional regularization to \eqref{LSform} would not alleviate this issue.

\begin{figure}[!htb]
    \centering
    \includegraphics[clip,width=\columnwidth]{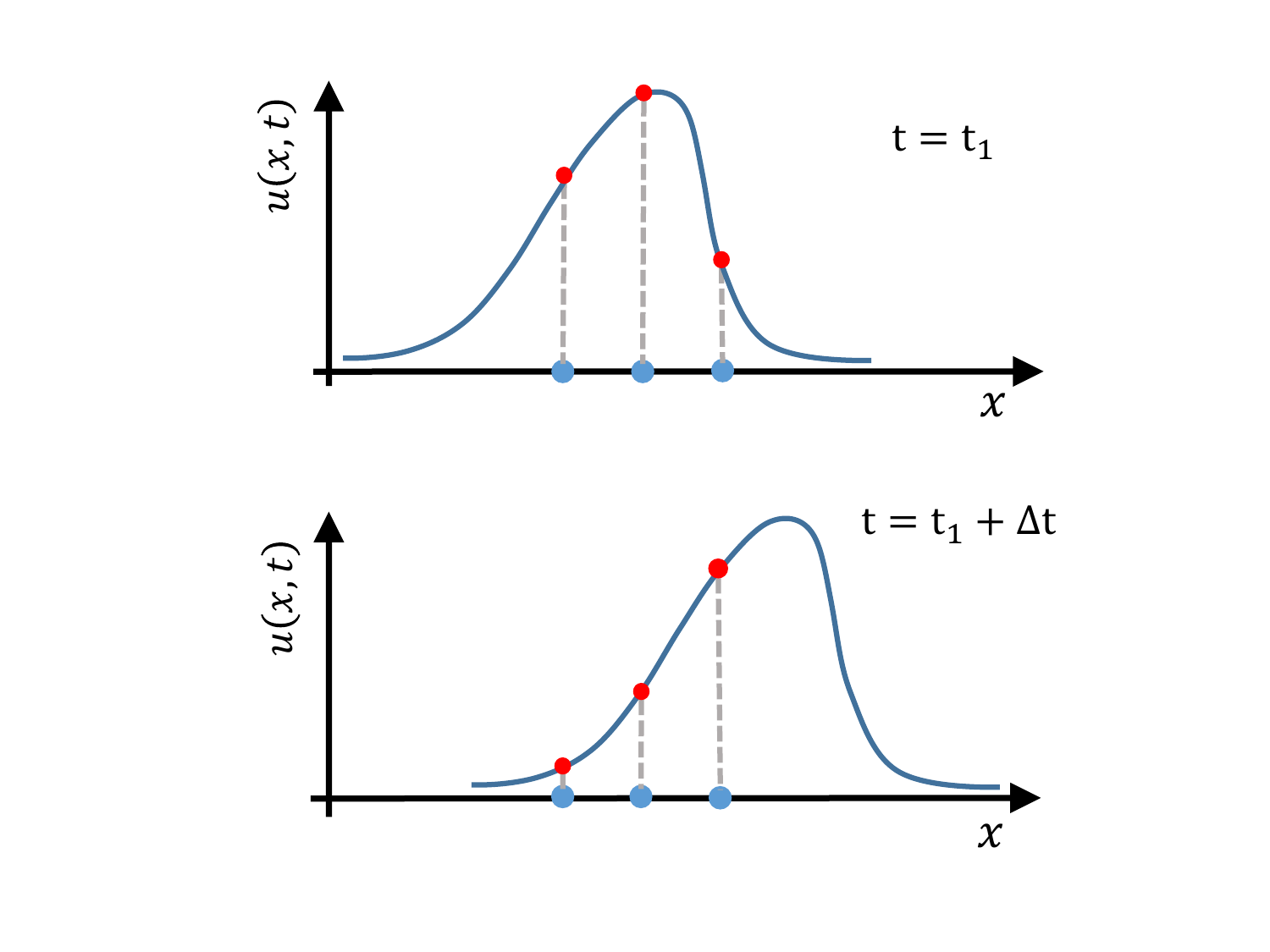}
    \caption{Schematic representation of sampling CFL condition: If the timestep $\Delta t$ is chosen large enough, the system dynamics cannot be captured by the red measurements in the stencil $Q_i$.}
    \label{fig:CFL}
\end{figure}

\subsubsection{Example: Linear Advection}

To exemplify the theoretical insights in \Cref{CFL_section}, we test a case of 1D, linear advection, which is modeled by

\begin{equation}
\label{linadv1d}
    \frac{\partial u}{\partial t} =c \;\frac{\partial u}{\partial x}.
\end{equation}

\noindent We infer a linear sFOM (i.e. \eqref{fom_structure_disc} with $\mathbf{H}_d=\mathbf{0}, \; \mathbf{c}_d=\mathbf{0}$), by collecting data from the analytical solution of \eqref{linadv1d}, given arbitrary initial conditions. We consider symmetric numerical stencils ($m=l$) and derive the 1st-order approximation of the data-driven sFOM via \eqref{approxD}. Substituting  \eqref{approxD} for \eqref{linadv1d} into the sufficient stability condition \eqref{stab_theory}, we obtain an explicit formula for the ``sampling CFL" condition

\begin{equation}
\label{CFL_linadv}
   \frac{\Delta t}{\Delta x} \leq \frac{m+1}{3c}.
\end{equation}

This inequality provides a priori bounds for the spatial and temporal discretization of the collected data via $\Delta x, \Delta t$ that guarantee stability of the data-driven, inferred sFOM. It should be noted that conditions like \eqref{CFL_linadv} are similarly expected for sFOM inference in continuous time, depending on the employed 
numerical scheme for the time derivatives computation \cite{sfom_opinf}.

We test the above bound numerically on a domain $x \in [-1,\; 1]$, for $c=1$. We obtain the training data by discretizing the analytical solution of \eqref{linadv1d} for $u(t=0)=\cos(x)$ with $\Delta x \in [0.005,\; 0.01]$ and $\Delta t \in [0.002, \; 0.02]$, for a total time $T=5$.

The numerical results are presented in \Cref{fig:linadv_CFL}. We solve \eqref{LSform} for all DOFs, considering a 3- and a 5-point stencil ($m=1,\; m=2$), for the aforementioned range of spatial and temporal discretizations $\Delta x$ and $\Delta t$ of the training data. We examine the eigenvalues of the inferred linear operator and record the average simulation error \eqref{av_error} of the resulting sFOM at $T=5$. We observe that the pairs of $\Delta x$ and $\Delta t$ beyond which the inferred sFOMs are unstable \eqref{eigs_discrete}, closely follow the estimate in \eqref{CFL_linadv}. The inferred dynamics remain accurate even slightly above this threshold, for the finite time of $T=5$.

\begin{figure}[!htbp]
    \centering
    \includegraphics[clip,width=1.05\columnwidth]{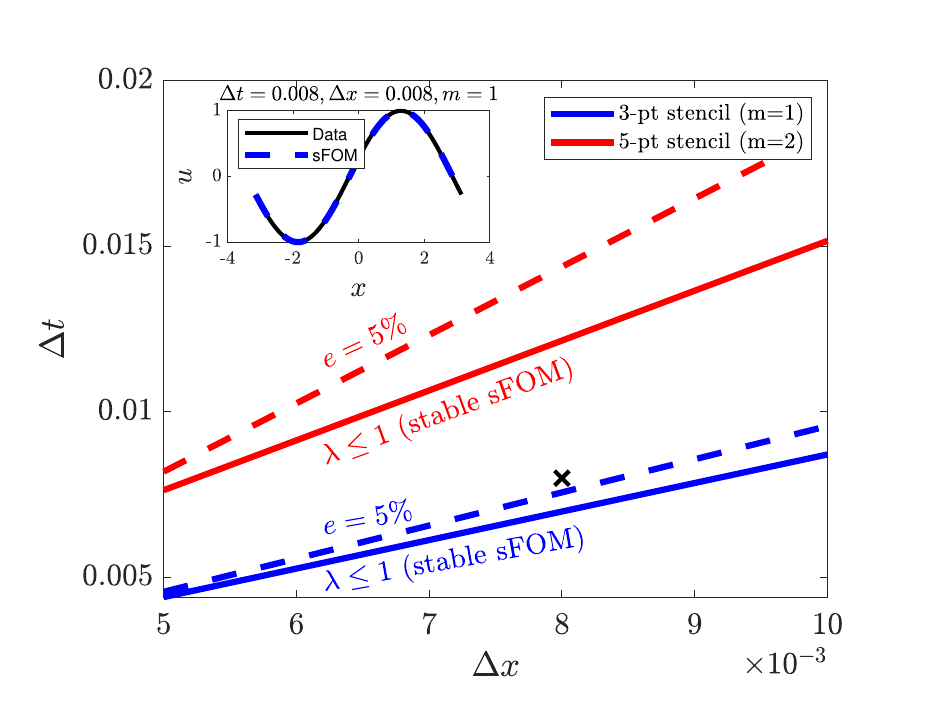}
    \caption{Discrete-time sFOM for linear advection: For a given numerical stencil, the stability of the inferred sFOM depends on the discretization of the training data $\Delta x, \Delta 
 t$, following the sampling CFL condition in \eqref{CFL_linadv}. The 3-pt stencil sFOM prediction for data with $\{\Delta x = 0.008, \; \Delta t = 0.008 \}$ (marked with a black cross) is given on the upper right corner of the figure: The stability condition \eqref{CFL_linadv} is slightly violated, but the results in finite time remain accurate.}
    \label{fig:linadv_CFL}
\end{figure}

\section{Nonlinear test cases}
\label{sec:num}

%%%%%%%%%%%%%%%%%%%%%%%%%%%%%%%%%%%%%%
We further investigate the task of sFOM inference for two nonlinear test cases, namely a 2D Burgers' equation benchmark and the incompressible flow in an oscillating lid-driven cavity. For this task, we exploit the theoretical insights obtained in \Cref{sec:theory} and comment on potential limitations of sFOM inference.

\subsection{2D Burgers' equation}
\label{Burgers}

The single-variable, two-dimensional viscous Burgers' equation writes as

\begin{equation}
\label{burgers_pde}
\frac{\partial u}{\partial t}=c \; u \left(\nabla . u \right)+\nu \; \nabla^2 u,
\end{equation}

\noindent where $c$ is the transport coefficient and $\nu$ is the kinematic viscosity. We consider periodic boundary conditions on a domain $(x,y) \in [0,1]\times[0,1]$ and a parameterized initial condition

 \begin{equation}
 \label{ini_burg}
u(t=0)=\alpha \;e^{-\mu{(x-0.5)}^2} e^{-\mu{(y-0.5)}^2}.
\end{equation}

\noindent To obtain the numerical solution for this system for $c=0.1$, $\nu= 10^{-3}$, $\alpha=1$ and $\mu=10$, we use a second-order, central-difference scheme for diffusion and a second-order, forward-difference scheme for advection. The training data are obtained by integrating \eqref{burgers_pde} with the MATLAB function \texttt{ode45}, using $\Delta t=0.01 \;s$ and $\Delta x=0.02\;m$, for a total time $T=10 \;s$.

In a discrete-time, data-driven setting, we aim to infer a quadratic 
model (due to the quadratic nonlinearity in \eqref{burgers_pde}), which can be formulated as

\begin{equation}
\label{2Dburg}
\mathbf{u}^{k+1}= \mathbf{A} \mathbf{u}^{k}+\left(\mathbf{H} {\mathbf{u}^{k}} \right) \circ {\mathbf{u}^{k}},
\end{equation}

\noindent where $\circ$ signifies the element-wise product of two vectors and $\mathbf{A}, \mathbf{H} \in \mathbb{R}^{n \times n}$. We use $\left(\mathbf{H} {\mathbf{u}^{k}} \right) \circ {\mathbf{u}^{k}}$ as a special case of the general nonlinear term $\mathbf{H} \left( \mathbf{u}^{k} \otimes \mathbf{u}^{k}\right)$ \cite{peherstorfer2016data}, to closely follow the structure of the quadratic nonlinearity in \eqref{burgers_pde}.

We employ \eqref{LS_reg} with the use of data augmentation \eqref{daug} to infer an sFOM, based on structure \eqref{2Dburg}.
We solve one least-squares problem for a $5 \times5$ stencil, with $m=500$ augmentation points and an $l_2$ regularization term. $30$ logarithmically spaced regularization values $\eta$ (see \eqref{LS_reg}) are tested in the range $\eta \in [ 10^{-5}, \; 10^0 ]$. Due to the different scaling of the linear and quadratic terms \cite{mcquarrie2021data}, the quadratic operator entries of $\bm{\beta}_i$ (see \eqref{fom_variables_i}) are regularized by a factor $g \times \eta$, where $g \in [10,20,50,80,100]$. The optimal values for $g$ and $\eta$ in the aforementioned ranges are computed via an L-curve \cite{Hansen2000, gkimisis2023adjacency}. The inferred sFOM is assembled by assigning the optimal solution of the regularized \eqref{daug} to all rows of operators $\mathbf{A}, \mathbf{H}$ in \eqref{2Dburg} (due to the prescribed periodic boundary conditions).

Both the numerical solution and the sFOM prediction at the end of the simulation time $T=10\; s$ are given in \Cref{fig:2Dbrg}. The selection of the $\eta$ and $g$ values were found to be crucial for the quality of the sFOM. A high $\eta$ value leads to stable, though inaccurate results, as expected by \eqref{GerSt}, while a very low $\eta$ value leads to unstable eigenvalues of $\mathbf{A}$ in \eqref{2Dburg}.

\begin{figure}[!htb]
    \centering
\includegraphics[clip,width=1.1\columnwidth]{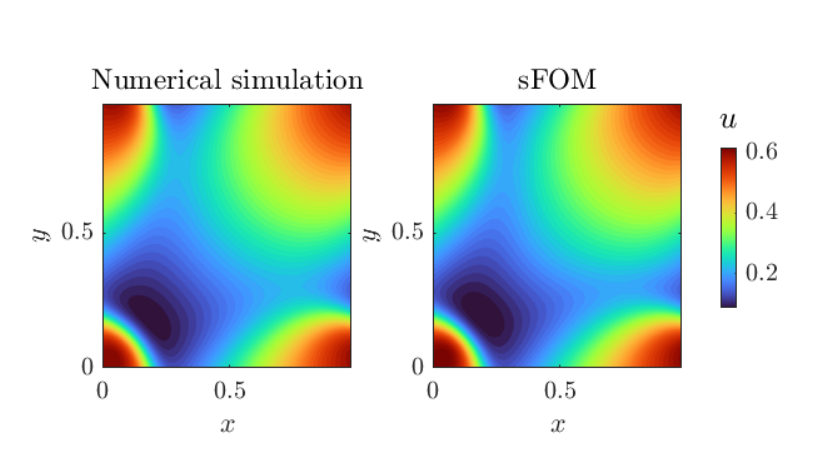}
    \caption{sFOM predictions for 2D Burgers' test case, at time $T=10\;s$, for $a=1$ and $\mu=10$: Successful reproduction of the system dynamics in the training data, with the solution of only 1 LS problem, while all eigenvalues of $\mathbf{A}$ lie within the unit circle.}
    \label{fig:2Dbrg}
\end{figure}

We simulate the obtained sFOM for initial conditions different from those used for training. In particular, we vary the parameters in \eqref{ini_burg}, with $\alpha \in [0.1,0.2,0.5,1,2,5]$ and $\mu \in [1,2,5,7,10,15,20]$. Parameter $\alpha$ corresponds to the scaling of the initial condition, which in turn scales with the advection speed of the system, while $\mu$ dictates the width of the initial Gaussian in \eqref{ini_burg}. The average relative error $e$ of the sFOM with respect to the numerical solution of \eqref{burgers_pde} at the final time $T=10$ for different $\alpha$ and $\mu$ values is plotted in \Cref{fig:2Dbrg_ic}. For values $\alpha \leq 1$, the sFOM provides accurate predictions, especially for low values of $\mu$. For $\alpha \geq 1$, the sFOM simulation is unstable for almost all values of $\mu$. This can be attributed to the system's increased advection speed, as well as the increased norm of the quadratic term \cite{Benner2015}. A potential solution to this issue could be to perform sFOM inference using training data for different initial conditions.

\begin{figure}[!htb]
    \centering
    \includegraphics[clip,width=\columnwidth]{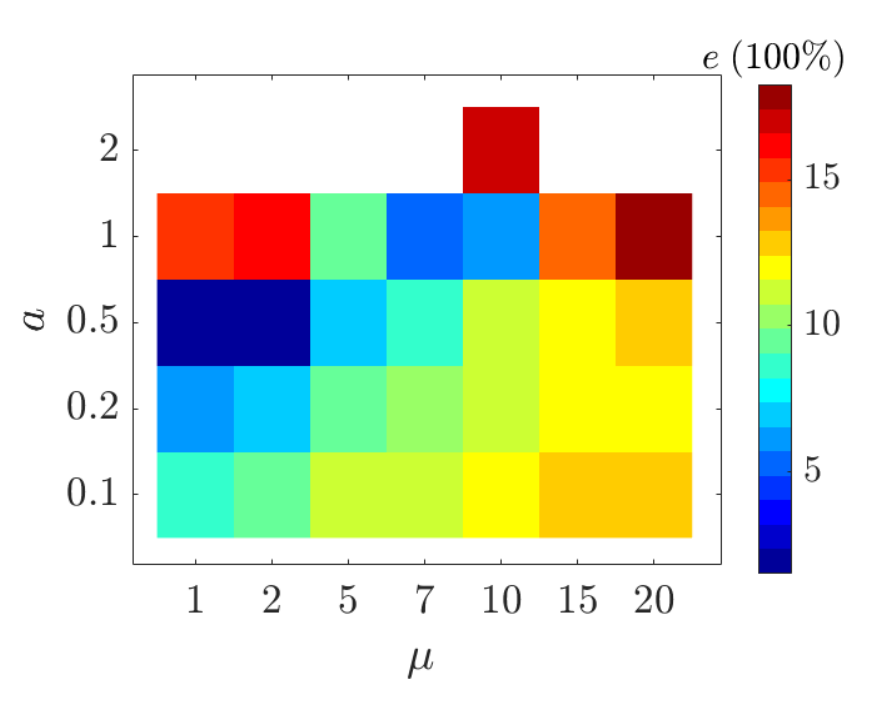}
    \caption{Average, relative error of the inferred sFOM \eqref{2Dburg} at $T=10 \; s$ for different initial conditions: The sFOM model makes accurate predictions for values of $\alpha \leq 1$ and small $\mu$ in \eqref{ini_burg}. The average, relative error for $\alpha=1$, $\mu=10$ corresponds to the state prediction in \Cref{fig:2Dbrg}.}
    \label{fig:2Dbrg_ic}
\end{figure}

\subsection{Lid-Driven Cavity}
 Increasing modeling and mathematical complexity, we investigate the performance of sFOM to an incompressible fluid dynamics test case. Past works have considered the inference of dynamical systems for fluid-structure interactions \cite{gkimisis2023adjacency, GkimisisFSI} or linear 2D test cases \cite{prakashROM} by projecting the inferred sFOM to a reduced basis. We hereby focus more on the properties of the sFOM, without projection.

The incompressible Navier-Stokes equations for the lid-driven cavity test case are formulated as

\begin{equation} 
\label{NS_incomp}
\left\{\begin{array}{ll}
\begin{aligned}
\frac{\partial \mathbf{u}}{\partial t} + \left({\mathbf{u}  \cdot \nabla} \right) \; {\mathbf{u}}
-\frac{1}{\rho} \nabla \cdot  {\boldsymbol{\sigma}}\left({\mathbf{u}}, {p}\right) & = \vec g, \quad\\ 
\nabla \cdot {\mathbf{u}} & =0, \quad
\end{aligned}
\end{array}\right. \mathbf{x}\in [0, L]\times[0, L],
\end{equation}

\noindent where ${\mathbf{u}}$ is the velocity field, ${p}$ is the pressure, ${\boldsymbol{\sigma}}$ is the stress tensor given by
\begin{equation}
\label{sigF}
 {\boldsymbol{\sigma}}\left({\mathbf{u}}, {p}\right) =-{p}\mathbb{I}+\rho_f \nu_f \left( {\nabla} {\mathbf{u}} + {\nabla} {\mathbf{u}}^T \right)
\end{equation}

\noindent for a Newtonian fluid. The cavity lid has a prescribed velocity $u_{top}$ along $x$, while the no-slip and no-through conditions ($\mathbf{u}=\mathbf{0}$) are assigned to the side and bottom walls of the lid \cite{ABDELMIGID2017123}.

We set an oscillating lid velocity over time $u_{top}=2 \cos(\pi t)$ and simulate \eqref{sigF} using Chorin's operator splitting method \cite{Chorin1968}, over a total time of $T=4\;s$ with $\Delta t =0.002$, $L=1$ and $\Delta x = 0.026$. 

After discretizing \eqref{NS_incomp} in space and time and canceling out the pressure term, the resulting system has a quadratic nonlinearity \cite{Benner2021} and can be expressed as

\begin{equation}
\label{lid_disc}
    \mathbf{u}^{k+1}= \mathbf{A}  \mathbf{u}^k + \mathbf{H}\left( \mathbf{u}^k \otimes \mathbf{u}^k \right) + \mathbf{c}.
\end{equation}

\noindent with $\mathbf{A} \in \mathbb{R}^{n\times n}, \mathbf{H} \in \mathbb{R}^{n\times n^2}, \mathbf{c}\in \mathbb{R}^{n}$. We approximate operators $\mathbf{A}, \mathbf{H}$ to be banded and sparse \cite{gkimisis2023adjacency, Bickel2012} and infer them via a sFOM \eqref{LS_reg}. We select a 3rd-order adjacency local support $Q_i$ for the linear operator and a 1st-order adjacency local support for the quadratic operator. We employ an $l_2$ regularization, optimized via an L-curve, for $30$ logarithmically spaced $\eta$ values in the range $\eta \in [ 10^{-3}, \; 10^0 ]$. The entries of $\mathbf{H}$ are regularized by a factor of $100 \times \eta$. The sFOM was inferred for the first $2 \;s$ of simulation data, while $t \in [2,\; 4] \;s$ comprises the testing time. 

\Cref{fig:error_lid} illustrates the flowfield average error over time for both components of $\mathbf{u}$. We observe that the error reaches a maximum value of $\approx 10\%$ for the vertical velocity component during testing time. However, the inferred sFOM produces accurate predictions of the system dynamics beyond the training regime. This is supported by the sFOM state prediction in \Cref{fig:2Dlid}, where the main circulation patterns at $t=4 \;s$ are accurately predicted.

\begin{figure}[!htb]
    \centering
\includegraphics[clip,width=\columnwidth]{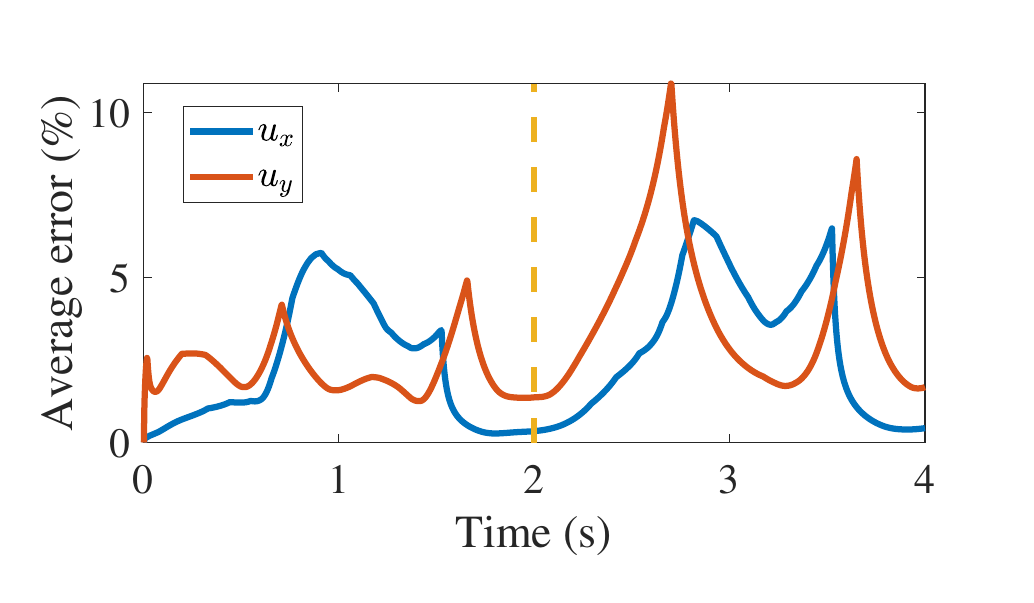}
    \caption{ Average flowfield sFOM error over time for oscillating lid-driven cavity: Accurate predictions by the quadratic sFOM \eqref{lid_disc} with 1600 DOFs beyond the training time of $2 \;s$.}
    \label{fig:error_lid}
\end{figure}

\begin{figure}[!htb]
    \centering
\includegraphics[clip,width=1.10\columnwidth]{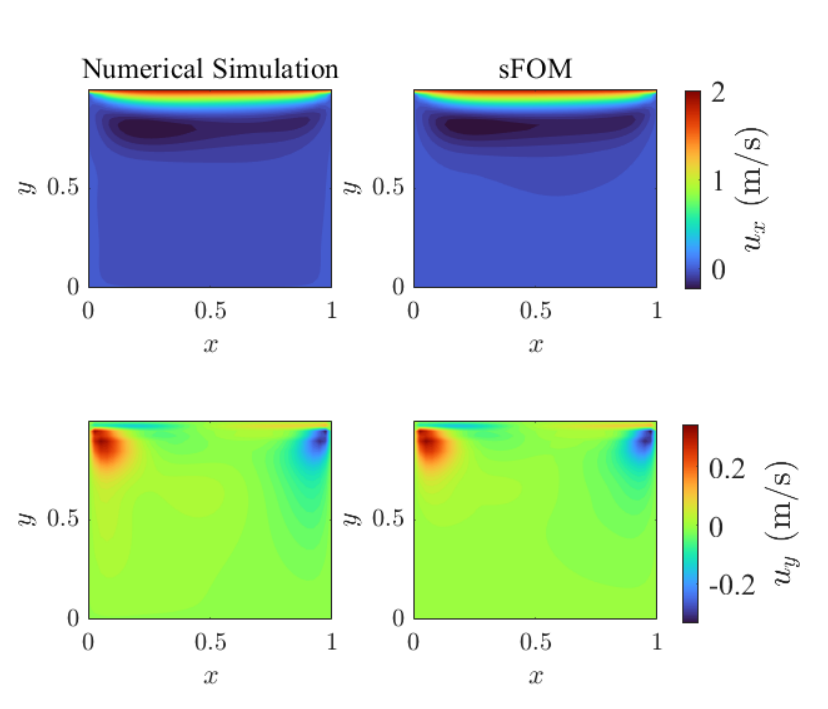}
    \caption{sFOM state predictions for oscillating lid-driven cavity at $t=4\;s$: The inferred sFOM accurately predicts the main features of the flow for both $u_x$ and $u_y$.}
    \label{fig:2Dlid}
\end{figure}

We should mention that using an extended local support $Q_i$ with the 3rd-order adjacent DOFs for the linear operator was found necessary for successful sFOM inference. This could be attributed to a stability limitation, as the one showcased in \eqref{CFL_linadv}. We also stress that the operators in \eqref{lid_disc} are only approximated as banded and sparse, which introduces an additional modeling error to the inferred sFOM. Finally, although the employed setup led to accurate predictions for this test case of incompressible fluid dynamics, it does not guarantee the algebraic condition of mass conservation \eqref{NS_incomp}. As a future research endeavour, we aim to investigate the effect of the projection approach \cite{gkimisis2023adjacency, prakashROM} to the properties of the resulting non-intrusive model, for the enforcement of algebraic constraints such as divergence freeness. 

\section{Conclusion \& Future Directions}
\label{sec:conc}

In this work, we investigated the method for sFOM inference in discrete-time dynamical systems. We formulated the main learning problem and established connections to past works in the field. On this basis, we derived theoretical results that provide insight into the capabilities and limitations of inference via sFOM and tested them for simple, linear dynamical systems. Moreover, we showcased the importance of the training data spatial and temporal discretization for sFOM inference via the derivation of a ``sampling CFL" condition and validated the theoretical findings with numerical results on 1D linear advection. Using these theoretical findings, we inferred sFOMs for two nonlinear systems, namely a 2D Burgers' test case and 2D, oscillating lid - driven cavity. These results indicate the potential, but also the limitations of sFOM inference in discrete-time for more realistic applications.

The current work can be extended towards the inference of parametric dynamical systems, especially in cases of parametric bifurcations, which find wide application in engineering. Such an example would be the incompressible flow around a cylinder at different inlet velocities. In parallel, an extensive study of corresponding limitations for the inference of dynamical systems via methods such as SINDy and Operator Inference would provide insight in identifying and overcoming potential failure mechanisms, towards the successful inference of more complex and large-scale dynamical systems. Finally, the established connections between regularization and stability, as well as the presented "sampling CFL" condition could provide useful insights on a more targeted selection of regularization values and strategies, for both sFOM in non-unifom grids, as well as other Machine Learning fields employing discrete maps, such as convolutional autoencoders. 

\section*{Acknowledgements}
The authors acknowledge support from the Deutsche Forschungsgemeinschaft (DFG, German Research Foundation) - 314838170, RTG 2297 MathCoRe.

\addcontentsline{toc}{section}{References}
\bibliographystyle{plainurl}
\bibliography{exampleref}
  
\end{document}